\theoremstyle{plain}
 \newtheorem{theorem}{Theorem}
 \newtheorem{corollary}{Corollary}
\newtheorem{lemma}{Lemma}
\begin{document}

\title[Uniform distribution of $\alpha n$ modulo one]{Uniform distribution of $\alpha n$ modulo one \\ for a family of integer sequences}

\begin{abstract} 
We show that the sequence $(\alpha n)_{n\in \mathcal{B}}$ is uniformly 
distributed modulo 1, for every irrational $\alpha$, provided $\mathcal{B}$ belongs 
to a certain family of integer sequences, which includes the prime, almost prime, 
squarefree, practical, densely divisible and lexicographical numbers.
We also give an estimate for the discrepancy if $\alpha$ has finite irrationality measure. 
\end{abstract}

\author{Andreas Weingartner}
\address{ 
Department of Mathematics,
351 West University Boulevard,
 Southern Utah University,
Cedar City, Utah 84720, USA}
\email{weingartner@suu.edu}
\subjclass[2020]{11K31, 11L07}
\maketitle

\section{Introduction}

We say that a sequence $(a_n)_{n\in \mathbb{N}} $ of real numbers is uniformly distributed modulo $1$ (u.d. mod 1) if
$$
\lim_{x\to \infty} \frac{|\{n\le x: \{a_n\} \in [a,b] \}|}{x} = b-a
$$ 
for all $0\le a<b\le 1$, where $\{u\}$ denotes the fractional part of $u$.  
Weyl's criterion asserts that this is equivalent to 
$$
\lim_{x\to \infty}  \frac{1}{x} \sum_{n\le x} e(l a_n) = 0,
$$
for every fixed non-zero integer $l$, where $e(y):=e^{2\pi i y}$. 

Let $\alpha$ be any irrational real number.
Weyl's criterion shows at once that the sequence $(\alpha n)_{n\in \mathbb{N}}$ is u.d. mod 1. 
Vinogradov \cite[Ch. XI]{Vino} proved that the same holds for the sequence $(\alpha p)_{p\in \mathbb{P}}$, where $p$ runs through the prime numbers.

We consider the following family of integer sequences. For each natural number $n$, let $I(n)$ be an arbitrary interval of real 
numbers, possibly empty, or the union of a bounded number (uniformly in $n$) of such intervals. 
Let $\mathcal{B}=\mathcal{B}_I $ be the set of positive integers containing $n=1$ and all those $n \ge 2$ with prime factorization $n=p_1 \cdots p_k$, $p_1\le p_2\le \ldots \le p_k$, which satisfy 
\begin{equation*}\label{Bdef}
p_{j} \in I\big( \prod_{1\le i<j} p_i \big) \qquad (1\le j \le k).
\end{equation*}

This setting is more general than in \cite{SPA}, where $I$ was of the form $I(n)=[1,\theta(n)]$ for some function $\theta(n)$. 
If $I(1)=[1,\infty)$ and $I(n)=\emptyset$ for $n>1$, then $\mathcal{B}\setminus \{1\}$ is the set of primes. 
If $I(1)=I(p)=[1,\infty)$ for primes $p$ and $I(n)=\emptyset$ for composite $n$, then $\mathcal{B}$ is the set of integers with at most two prime factors,
counted with multiplicity. Similarly, one can obtain more general almost-primes.
Squarefree numbers can be generated with $I(n)=(P(n),\infty)$, where $P(n)$ denotes the largest prime factor of $n>1$ and $P(1)=1$. 
Integers whose divisors grow by factors of at most $t$, which are called \emph{$t$-densely divisible} \cite{Poly} or \emph{$t$-dense} \cite{Tau},
arise from $I(n)=[1,nt]$, while the \emph{practical numbers} result from $I(n)=[1,\sigma(n)+1]$, where $\sigma(n)$ is the sum of the positive divisors of $n$. 
This family also includes the \emph{lexicographical numbers} \cite{ST}, where $I(n)=\{P(n)\}\cup (n,\infty)$.
Define
$$
\mathcal{B}(x) := \mathcal{B}\cap [1,x], \quad B(x):=|\mathcal{B}(x)|, \quad B_d(x):=|\{n \in \mathcal{B}(x) : d|n\}|.
$$

\begin{theorem}\label{thm1}
Let $\alpha$ be any irrational real number.
Assume there exist constants $A\ge 0$ and $\delta >0$ such that
\begin{equation}\label{Bcond}
B(x)\gg x (\log x)^{-A}, \qquad B_d(x)\ll  d^{-\delta}B(x), \qquad (x,d \ge 2).
\end{equation}
Then the sequence $(\alpha n)_{n \in \mathcal{B}}$ is u.d. mod 1. 
\end{theorem}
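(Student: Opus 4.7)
By Weyl's criterion, it suffices to prove that, for every fixed nonzero integer $l$,
\[
T(x) := \sum_{n \in \mathcal{B}(x)} e(l\alpha n) = o\bigl(B(x)\bigr)
\qquad (x \to \infty).
\]
The plan is to exploit the recursive multiplicative structure of $\mathcal{B}$ combined with Vinogradov's bound on exponential sums over primes. First I would peel off the largest prime factor: every $n \in \mathcal{B}$ with $n \ge 2$ has the unique decomposition $n = mp$ with $p = P(n)$, $m = n/p \in \mathcal{B}$, $p \in I(m)$, and $p \ge P(m)$; conversely, any such pair produces an element of $\mathcal{B}$. Hence
\[
T(x) = O(1) + \sum_{m \in \mathcal{B}(x/2)} S_m(x), \qquad S_m(x) := \sum_{\substack{p\text{ prime}\\ p \in I(m) \cap [P(m),\, x/m]}} e(l\alpha m p).
\]
For fixed $m$, $S_m(x)$ is a Vinogradov exponential sum over primes in a bounded union of intervals at the irrational frequency $\beta_m := l\alpha m$.

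For each $m$, Dirichlet approximation with a parameter $Q$ gives a coprime pair $(a_m, q_m)$ with $q_m \le Q$ and $|\beta_m - a_m/q_m| \le 1/(q_m Q)$, and Vinogradov's classical bound yields
\[
|S_m(x)| \ll \Bigl(\tfrac{x/m}{\sqrt{q_m}} + (x/m)^{4/5} + \sqrt{(x/m)\,q_m}\Bigr)(\log x)^{C},
\]
which is substantial cancellation unless $q_m$ is very small. To sum over $m \in \mathcal{B}(x/2)$, I would first discard the \emph{smooth tail}: those $n \in \mathcal{B}(x)$ with $P(n) \le M_0 := (\log x)^{K}$ (for a large constant $K$) are bounded by the count of $M_0$-smooth integers up to $x$, which is $o(B(x))$ via a Rankin-type estimate together with $B(x)\gg x(\log x)^{-A}$. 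Among the remaining $m$, in each dyadic range $[M, 2M]$ I would separate \emph{exceptional} $m$ (those with $q_m \le Q_0$) from the rest. Non-exceptional $m$ give controlled cancellation via Vinogradov. Exceptional $m$ are rare: for each fixed $q \le Q_0$, the sequence $(q l\alpha m)_m$ is u.d.\ mod $1$, so the number of $m \in [M, 2M]$ for which $q l\alpha m$ lies within $1/Q$ of an integer is $O(M/Q)$, giving $O(M Q_0/Q)$ exceptional $m$ in total. Polylogarithmic choices of $Q_0$ and $Q$ then render this contribution $o(B(x))$.

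The hardest part is orchestrating the three error terms in Vinogradov's bound uniformly across dyadic ranges of $m$, while making the smooth-tail and exceptional-set estimates fit together. Both conditions in \eqref{Bcond} enter in a complementary way: the density bound $B(x) \gg x(\log x)^{-A}$ calibrates the loss when comparing trivial counts in $[1, x]$ to counts inside $\mathcal{B}(x)$, while $B_d(x) \ll d^{-\delta} B(x)$ controls how much $\mathcal{B}$ can concentrate on multiples of small moduli, which is what could otherwise conspire with a bad Diophantine approximation to $\alpha$ to inflate the Weyl sum. A secondary technical point is that $I(m)$ may consist of very short intervals; the standing hypothesis that their number is uniformly bounded in $n$ keeps the Vinogradov truncation to subintervals manageable.
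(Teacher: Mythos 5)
Your plan peels off the largest prime factor and treats, for each $m$, the sum over primes $p\le x/m$ at the shifted frequency $\beta_m=l\alpha m$ via Vinogradov's bound. This is a purely Type~I strategy, and it has a genuine gap: it cannot handle the range where $m$ is large. When $m$ is, say, of size $\sqrt x$ or more, the prime range $[1,x/m]$ is short, and Vinogradov's three-term bound $\left((x/m)q_m^{-1/2}+(x/m)^{4/5}+\sqrt{(x/m)q_m}\right)(\log x)^{C}$ gives nothing below the trivial bound after summing over $m$. Concretely, even if every term were as favourable as possible, $\sum_{m\le x/2}(x/m)^{4/5}\asymp x$ and $\sum_{m\le x/2}\sqrt{(x/m)q_m}\gg x$ whenever some $q_m\ge 1$, so the total is not $o(B(x))$ when $B(x)\ll x/(\log x)^A$. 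Your proposed remedies (discarding the smooth tail $P(n)\le(\log x)^K$, and bounding the exceptional set of $m$ with small $q_m$) do not touch this problem, because the obstruction is not rarity of bad $m$ but shortness of the prime range when $m$ is large. The paper closes exactly this gap with a bilinear Type~II estimate (its Lemma~\ref{lemminarc}, quoted from Iwaniec--Kowalski) applied to the range $m>M$, $p>N$, after ``cosmetic surgery'' to remove the interval condition $p\in I(m)$; Vaughan's prime bound (your Vinogradov step) is used only for $m\le M$ with $M$ of sub-polynomial size. Without a Type~II ingredient, the argument cannot go through.

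A secondary issue: you correctly identify that the hypothesis $B_d(x)\ll d^{-\delta}B(x)$ must prevent $\mathcal{B}$ from concentrating on arithmetic progressions to small moduli, but your outline never actually invokes it. In the paper it enters only on the major arcs (Theorem~\ref{thm2}), where after the Siegel--Walfisz step and Ramanujan's sum one is left to bound $\varphi(q)^{-1}\sum_{d\mid q} d\,B_d(x)$; the density hypothesis $B(x)\gg x(\log x)^{-A}$ is what lets the minor-arc saving $x/R$ with $R=(\log x)^{A+1}$ beat $B(x)$. You have no major-arc case at all, because you replace the global dichotomy (is $\alpha$ near a rational with small denominator?) with per-$m$ Dirichlet approximations; but then there is nowhere for the $B_d$ hypothesis to act, and you would in fact need it to control the $m$ in the exceptional set, which you currently bound only by a crude $O(MQ_0/Q)$ count that uses nothing about $\mathcal{B}$.
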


When $\alpha>1$, the integer sequence $(\lfloor \alpha n \rfloor)_{n\in \mathbb{N}}$ contains 
a proportion of $\frac{1}{\alpha}$ of the natural numbers.
Corollary \ref{cor0}, which follows easily from Theorem \ref{thm1} (see Sec. \ref{cor0proof}), 
shows that it contains the same proportion of members of $\mathcal{B}$.
\begin{corollary}\label{cor0}
Suppose that the assumptions of Theorem \ref{thm1} are satisfied and $\alpha >1$.
Then
\begin{equation}\label{eqcor0}
|\{n \in \mathbb{N}: \lfloor \alpha n \rfloor  \in \mathcal{B}(x) \}| \sim \frac{1}{\alpha} B(x) \quad (x \to \infty).
\end{equation}
\end{corollary}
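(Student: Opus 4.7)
The plan is to reduce Corollary \ref{cor0} to a single application of Theorem \ref{thm1} with $\alpha$ replaced by its reciprocal $\beta := 1/\alpha$, which is again irrational. The main idea is a reformulation of the count on the left-hand side of \eqref{eqcor0} as the number of $m \in \mathcal{B}(x)$ for which $\{m\beta\}$ lies in a particular subinterval of length $\beta$.

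For each $m \ge 1$, the integers $n$ with $\lfloor \alpha n \rfloor = m$ are exactly the integers in the half-open interval $[m\beta,(m+1)\beta)$. Since $\alpha>1$, this interval has length $\beta < 1$ and therefore contains at most one integer; since $\alpha$ is irrational, $m\beta$ is never itself an integer when $m\ge 1$, so the interval contains an integer if and only if $\lfloor m\beta\rfloor+1<(m+1)\beta$, i.e.\ $\{m\beta\}>1-\beta$. Thus $n \mapsto \lfloor\alpha n\rfloor$ gives a bijection from $\{n\in\mathbb{N}:\lfloor\alpha n\rfloor\in\mathcal{B}(x)\}$ onto $\{m\in\mathcal{B}(x):\{m\beta\}\in(1-\beta,1)\}$, and the problem is reduced to estimating the size of the latter set.

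Because \eqref{Bcond} is a condition on $\mathcal{B}$ alone and $\beta$ is irrational, Theorem \ref{thm1} applies to $\beta$ and shows that $(m\beta)_{m\in\mathcal{B}}$ is uniformly distributed modulo $1$. The irrationality of $\alpha$ also guarantees that $\{m\beta\}$ never equals the endpoint $1-\beta$ (otherwise $\alpha$ would be rational), so there is no ambiguity between the open, half-open, and closed versions of the target interval. Hence the size of the right-hand set in the bijection is $\sim \beta\, B(x) = B(x)/\alpha$, which is exactly \eqref{eqcor0}. I do not expect any serious obstacle: the whole analytic weight is carried by Theorem \ref{thm1}, and the only care needed is the elementary characterization $\{m\beta\}>1-\beta$ together with the remark that the measure-zero endpoint cannot be attained, thanks to the irrationality of $\alpha$.
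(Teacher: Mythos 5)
Your proof is correct and takes essentially the same approach as the paper: both set $\lambda=\beta=1/\alpha$, characterize membership of $\lfloor\alpha n\rfloor$ in $\mathcal{B}(x)$ via the condition $\{m\beta\}>1-\beta$, note that $\alpha>1$ forces the map $n\mapsto\lfloor\alpha n\rfloor$ to be injective and irrationality prevents endpoint issues, and then apply Theorem~\ref{thm1} to $\beta$.
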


If $I(n)=[1,\theta(n)]$, the following assumption implies \eqref{Bcond},
by Lemma \ref{lemBmult}:
There exist constants $ C,J, K\ge 1$ and  $0\le \eta <1$, 
such that for all $m, n \in \mathbb{N}$, 
\begin{equation}\label{thetahyp}
\theta (n) \le \theta(mn)\le Cm^J \theta(n) , \quad \max(2,n)\le \theta(n) \le K n \exp((\log n)^\eta).
\end{equation}

\begin{corollary}\label{cor1}
Let $\alpha$ be irrational. Assume $I(n)=[1,\theta(n)]$ and \eqref{thetahyp} holds.
Then the sequence $(\alpha n)_{n \in \mathcal{B}}$ is u.d. mod 1. If $\alpha>1$, then \eqref{eqcor0} holds.
\end{corollary}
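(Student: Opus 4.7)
The proof is a direct reduction to Theorem \ref{thm1} and Corollary \ref{cor0}. Once the two estimates in \eqref{Bcond} are verified for $\mathcal{B}=\mathcal{B}_I$ with $I(n)=[1,\theta(n)]$, both conclusions of Corollary \ref{cor1} are automatic. The statement already identifies the right tool, Lemma \ref{lemBmult}, whose role is precisely to deduce \eqref{Bcond} from \eqref{thetahyp}. Accordingly, the plan has three steps: first, apply Lemma \ref{lemBmult} to the hypotheses \eqref{thetahyp} to produce constants $A\ge 0$ and $\delta>0$ satisfying $B(x)\gg x(\log x)^{-A}$ and $B_d(x)\ll d^{-\delta}B(x)$ for $x,d\ge 2$; second, apply Theorem \ref{thm1} to conclude that $(\alpha n)_{n\in\mathcal{B}}$ is u.d. mod 1 for every irrational $\alpha$; third, when $\alpha>1$, quote Corollary \ref{cor0} to obtain \eqref{eqcor0}.

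All genuine work therefore lies inside Lemma \ref{lemBmult}, which I would expect to prove separately. There, the lower bound $\max(2,n)\le \theta(n)$ ensures that $\mathcal{B}$ contains every sufficiently smooth number, and together with the near-linear upper bound $\theta(n)\le Kn\exp((\log n)^\eta)$ with $\eta<1$ this yields $B(x)\gg x(\log x)^{-A}$ for some $A=A(\eta,K)$ via a Dickman--de Bruijn type estimate for smooth-number counts. The divisibility inequality $B_d(x)\ll d^{-\delta}B(x)$ is the more delicate part; here one uses the full two-sided bound $\theta(n)\le \theta(mn)\le C m^J\theta(n)$ to pair each $n\in\mathcal{B}(x)$ with $d\mid n$ against a companion element of $\mathcal{B}(x/d)$ and to argue that this correspondence cannot collide too often, extracting a positive power of $d$ in the process. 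Securing that positive exponent $\delta$, given that $\theta$ is only controlled up to a polynomial factor $Cm^J$ under multiplication, is the main obstacle; once Lemma \ref{lemBmult} is in hand, Corollary \ref{cor1} follows without further effort.
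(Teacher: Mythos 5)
Your reduction matches the paper exactly: Lemma \ref{lemBmult} supplies \eqref{Bcond} (with $A=1$ from $B(x)\sim c_\theta x/\log x$, and any $\delta<1$ from $B_d(x)\ll B(x)\log(2d)/d$), after which Theorem \ref{thm1} and Corollary \ref{cor0} give both conclusions. Your closing speculation about the internals of Lemma \ref{lemBmult} diverges somewhat from the paper --- which cites \cite[Thm.\ 4]{Tau} for the asymptotic of $B(x)$, and combines \cite[Lemma 8]{SW} with \cite[Prop.\ 1]{PW} (rather than a direct pairing argument) for the $B_d(x)$ bound; in particular $A$ does not depend on $\eta$ or $K$ --- but since you flagged that lemma as a separate result to be proved elsewhere, this does not affect the correctness of your derivation of Corollary \ref{cor1}.
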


\renewcommand{\arraystretch}{1.2}
\begin{table}[h]
     \begin{tabular}{| c | c |  c |  }
    \hline
  Sequence $\mathcal{B}$ &  $ I(n) $ & OEIS \cite{oeis} \\ \hline \hline
almost prime & see above & A037143 \\ \hline
squarefree  & $(P(n),\infty)$ & A005117 \\ \hline
lexicographical & $\{P(n)\} \cup (n,\infty)$ & A361232 \\ \hline
    $t$-dense  ($t\ge 2$) & $[1,nt]$ &   A174973 ($t=2$)\\ \hline 
 practical & $[1,\sigma(n)+1]$  & A005153  \\ \hline 
 practical \& $\varphi$-practical & $[1,n+1]$ & A359420 \\ \hline 
   Nullwertzahlen & $[1,\max(2,n)]$ &  A047836 \\ \hline 
\end{tabular}    
 \caption{Examples of $\mathcal{B}$ with $(\alpha n)_{n\in \mathcal{B}}$ u.d. mod 1.
It's easy to verify \eqref{Bcond} for the first two examples, and for the third with \cite[Thm. 2]{ST}. For the others it follows from 
Corollary \ref{cor1}.}\label{table1}
\end{table}
\renewcommand{\arraystretch}{1}

\begin{corollary}
Let $\alpha$ be irrational.
If $\mathcal{B}$
is one of the sequences in Table \ref{table1}, then the sequence $(\alpha n)_{n \in \mathcal{B}}$ is u.d. mod 1.
If $\alpha>1$, then \eqref{eqcor0} holds.
\end{corollary}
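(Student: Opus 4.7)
The plan is to verify, for each of the seven rows of Table~\ref{table1}, either the hypothesis \eqref{Bcond} of Theorem~\ref{thm1} directly or the hypothesis \eqref{thetahyp} of Corollary~\ref{cor1}; either condition immediately delivers the u.d.\ conclusion and, when $\alpha > 1$, the asymptotic \eqref{eqcor0}.

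For the almost primes, I would apply Landau's classical estimate $B(x) \asymp x(\log\log x)^{k-1}/\log x$ together with the divisor bound $B_d(x) \ll B(x)/d$, the latter obtained by writing $n = dm$ with $m$ an almost prime of bounded degree; this furnishes \eqref{Bcond} with $A = 1$ and $\delta = 1$. For the squarefree numbers the verification is even more direct, since $B(x) = (6/\pi^2)x + O(\sqrt{x})$ and $B_d(x) \le x/d$ for squarefree $d$ (with $B_d(x) = 0$ otherwise). For the lexicographical row I would simply invoke \cite[Thm.~2]{ST}, as already indicated in the caption.

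For the remaining four rows, $I(n) = [1, \theta(n)]$ with $\theta(n) \in \{nt,\ \sigma(n) + 1,\ n + 1,\ \max(2,n)\}$, and the plan is to check \eqref{thetahyp}. Monotonicity $\theta(n) \le \theta(mn)$ and the lower bound $\max(2,n) \le \theta(n)$ are immediate in every case. The upper bound $\theta(n) \le Kn\exp((\log n)^\eta)$ holds with $\eta = 0$ for the three linear choices, and follows from the classical estimate $\sigma(n) \ll n \log\log n$ (with any $0 < \eta < 1$) in the practical case. The polynomial-growth bound $\theta(mn) \le Cm^J \theta(n)$ yields $\theta(mn)/\theta(n) \le 2m$ at once in each of the three linear cases.

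The only step requiring genuine thought is the bound $\sigma(mn) \le Cm^J \sigma(n)$ needed for the practical numbers; I expect this to be the main obstacle, although it is standard. I would deduce it from the submultiplicativity $\sigma(mn) \le \sigma(m)\sigma(n)$, verified prime-by-prime via the observation that if $p^a \| n$ and $b \ge 0$ then $\sigma(p^b n)/\sigma(n) = (p^{a+b+1} - 1)/(p^{a+1} - 1) \le \sigma(p^b)$, combined with $\sigma(m) \ll m^{1+\varepsilon}$; this gives \eqref{thetahyp} with $J = 2$, completing the verification.
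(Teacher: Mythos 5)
Your plan matches the paper's own (tersely indicated) proof exactly: verify \eqref{Bcond} directly for the first two rows, cite Stef--Tenenbaum for the lexicographical row, and invoke Corollary~\ref{cor1} via \eqref{thetahyp} for the remaining four. The verifications of \eqref{thetahyp} are correct; in particular the submultiplicativity of $\sigma$ and the bound $\sigma(m)\ll m^{1+\varepsilon}$ are the right ingredients for the practical case, and the three linear $\theta$'s are immediate.

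One small slip in the almost-prime row: $\delta=1$ does not actually work there. If $d$ is a $k$-almost prime with $x/2<d\le x$, then $B_d(x)\ge 1$ while $B(x)/d \asymp (\log\log x)^{k-1}/\log x \to 0$, so $B_d(x)\ll B(x)/d$ fails. The fix is trivial --- since Theorem~\ref{thm1} only requires some $\delta>0$, take any $\delta<1$; then for $d\le\sqrt x$ one has $\log(x/d)\asymp\log x$ and the counting argument you describe gives the bound, while for $d>\sqrt x$ the trivial $B_d(x)\le x/d<\sqrt x$ is already $\ll d^{-\delta}B(x)$. (Note that $\delta=1$ is genuinely fine for the squarefree case, because there $B(x)\asymp x$.) With this adjustment the proposal is complete and follows the same route as the paper.
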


The proof of Theorem \ref{thm1} 
is divided into two cases. If $\alpha$ is very close to a rational number $a/q$ with a
small denominator $q$ (relative to $x$), we say $\alpha$ belongs to the \emph{major arcs}.
If not, we say $\alpha$ belongs to the \emph{minor arcs}.  

\begin{theorem}[Major Arcs]\label{thm2}
Let $g(x)=\log x \log \log x$. There exists a constant $c>0$ such that the following holds. 
Let $U>0$ be fixed. 
For  $q \le (\log x)^U$, $\alpha = a/q+\beta$, where $(a,q)=1$ and $|\beta| \le \exp(c\sqrt[3]{g(x)}) /x$, we have
\begin{equation}\label{eq1thm2}
\sum_{n\in \mathcal{B}(x)} e(\alpha n)  =  \sum_{n \in \mathcal{B}(x)}  \frac{\mu(q/(n,q))}{\varphi(q/(n,q))} e(\beta n) + O( x \exp(-2c \sqrt[3]{g(x)})),
\end{equation}
where $(n,q)=\gcd(n,q)$, $\mu$ is the M\"obius function and $\varphi$ is Euler's totient function.
If, in addition, $B(x) \gg x \exp(-c\sqrt[3]{g(x)})$ and
$B_d(x) \ll d^{-\delta}B(x)$ for $x,d \ge 2$ and some constants $0<\varepsilon <\delta \le 1$, then
$$
\sum_{n\in \mathcal{B}(x)} e(\alpha n) \ll \frac{B(x)}{ q^{\delta-\varepsilon}} .
$$
\end{theorem}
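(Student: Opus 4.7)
Write $e(\alpha n)=e(an/q)\,e(\beta n)$ and evaluate the $q$-periodic factor via an equidistribution result for $\mathcal{B}$ in arithmetic progressions modulo $q$, then reinstate $e(\beta n)$ by partial summation. Partitioning $\mathcal{B}(t)$ by residue classes $n\equiv r\!\pmod q$ and grouping residues sharing a common $d=(r,q)$, the Ramanujan-sum identity
\[
\sum_{\substack{r\!\pmod q\\(r,q)=d}} e(ar/q)=c_{q/d}(a)=\mu(q/d)\qquad((a,q)=1)
\]
collapses the twist $e(an/q)$ to the combinatorial weight $\mu(q/d)/\varphi(q/d)=F_q(n)$, where $F_q(n):=\mu(q/(n,q))/\varphi(q/(n,q))$. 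Thus the first task is a Siegel--Walfisz-type estimate for $\mathcal{B}$: uniformly for $q\le(\log x)^U$, $t\le x$ and $r\!\pmod q$,
\[
|\{n\in\mathcal{B}(t):n\equiv r\!\pmod q\}|=\frac{|\{n\in\mathcal{B}(t):(n,q)=(r,q)\}|}{\varphi(q/(r,q))}+O\!\bigl(x\exp(-C\sqrt[3]{g(x)})\bigr).
\]
This is the non-routine ingredient, which I would obtain by a Dirichlet-convolution decomposition of $\mathbf{1}_{\mathcal{B}}$ adapted to the recursive definition of $\mathcal{B}$ via the intervals $I(n)$, followed by Vinogradov-type bilinear-sum bounds.

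\textbf{From counting to the exponential sum.} Granting the equidistribution, set $U(t):=\sum_{n\in\mathcal{B}(t)}e(an/q)$ and $V(t):=\sum_{n\in\mathcal{B}(t)}F_q(n)$. Summing over residue classes and applying the Ramanujan identity gives $U(t)-V(t)\ll x\exp(-C\sqrt[3]{g(x)})$ uniformly in $t\le x$. Integrating by parts against $e(\beta t)$ yields
\[
\sum_{n\in\mathcal{B}(x)}e(\alpha n)-\sum_{n\in\mathcal{B}(x)}F_q(n)e(\beta n)\ll (1+2\pi|\beta|x)\sup_{t\le x}|U(t)-V(t)|.
\]
The hypothesis $|\beta|\le\exp(c\sqrt[3]{g(x)})/x$ bounds $|\beta|x$ by $\exp(c\sqrt[3]{g(x)})$, so choosing $C=3c$ in the equidistribution delivers the error $O(x\exp(-2c\sqrt[3]{g(x)}))$ promised in \eqref{eq1thm2}.

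\textbf{The $B(x)/q^{\delta-\varepsilon}$ bound.} Under the lower bound $B(x)\gg x\exp(-c\sqrt[3]{g(x)})$ and $q\le(\log x)^U$, the error term in \eqref{eq1thm2} is absorbed into $B(x)/q^{\delta-\varepsilon}$. For the main term, bound trivially by $\sum_{n\in\mathcal{B}(x)}|F_q(n)|$. Since $F_q(n)=0$ unless $q/(n,q)$ is squarefree and otherwise $|F_q(n)|=1/\varphi(q/(n,q))$, partitioning by $d=(n,q)$, applying $B_d(x)\ll d^{-\delta}B(x)$, and writing $e=q/d$ gives
\[
\sum_{n\in\mathcal{B}(x)}|F_q(n)|\le\frac{B(x)}{q^{\delta}}\sum_{\substack{e\mid q\\ e\text{ squarefree}}}\frac{e^{\delta}}{\varphi(e)}=\frac{B(x)}{q^{\delta}}\prod_{p\mid q}\Bigl(1+\frac{p^{\delta}}{p-1}\Bigr)\ll\frac{B(x)}{q^{\delta-\varepsilon}},
\]
for any $\varepsilon>0$, since $\delta\le 1$ makes each bracket at most $3$ and $3^{\omega(q)}\ll q^{\varepsilon}$. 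The genuine obstacle is the Siegel--Walfisz equidistribution with the cube-root saving: preserving uniformity in $q$ up to $(\log x)^U$ while propagating the interval restrictions $p_j\in I(p_1\cdots p_{j-1})$ through a bilinear decomposition is the delicate point; everything else is orthogonality, partial summation, and multiplicative bookkeeping.
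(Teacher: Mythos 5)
Your reduction from the exponential sum to an equidistribution statement for $\mathcal{B}$ in residue classes mod $q$ (Ramanujan sums for the twist, Riemann--Stieltjes/Abel summation to reinstate $e(\beta n)$, and the error absorption using $|\beta|x\le\exp(c\sqrt[3]{g(x)})$ and $q\le(\log x)^U$) is sound and matches the combinatorics of the paper. Your proof of the second bound $\ll B(x)/q^{\delta-\varepsilon}$ is also correct; the paper uses $\varphi(q/d)\ge\varphi(q)/d$ and a divisor-count bound where you use the Euler product $\prod_{p\mid q}(1+p^\delta/(p-1))\ll q^\varepsilon$, but the two bookkeeping routes are equivalent.

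The gap is the central ingredient, which you declare but do not prove: the ``Siegel--Walfisz for $\mathcal{B}$'' with error $O(x\exp(-C\sqrt[3]{g(x)}))$ uniformly for $q\le(\log x)^U$. You sketch obtaining it from ``a Dirichlet-convolution decomposition of $\mathbf{1}_{\mathcal{B}}$ \ldots followed by Vinogradov-type bilinear-sum bounds.'' That is not how the paper's major-arc argument works and it is the wrong tool here: Type-II/bilinear sums are what the paper reserves for the minor arcs (Theorem~\ref{thm3}), precisely because they give power savings in $q$ but cannot deliver the cube-root-of-$g$ savings uniformly for $q$ as small as a power of $\log x$. What you are missing is the structural observation that drives the whole major-arc proof: every $n\in\mathcal{B}$ with $n>1$ factors as $n=mp$ with $m\in\mathcal{B}$ and $p=P(n)$ (this is built into the definition of $\mathcal{B}$ via the intervals $I(\cdot)$), and after discarding $n$ with $P(n)\le\exp(g(x)^{2/3})$ using de~Bruijn's smooth-number bound (Lemma~\ref{lemPsi}), one has $p>q$, hence $(n,q)=(m,q)=:d$. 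The congruence $n\equiv b\pmod q$ then pins $p$ to a single residue class $b'\overline{m'}\pmod{q'}$ with $q'=q/d$, and the \emph{ordinary} Siegel--Walfisz theorem for primes (Lemma~\ref{lemSW}), applied to the one-variable sum over $p$ in a Riemann--Stieltjes integral against $e(\beta\rho m)$, produces the main term $\varphi(q')^{-1}\sum_p e(\beta pm)$ plus an error $\ll(x/m)e^{-c_1\sqrt{\log(x/m)}}$, which is $\ll(x/m)\exp(-c_1\sqrt[3]{g(x)})$ since $x/m\ge\exp(g(x)^{2/3})$. Summing over $m$ and over $b$ (Ramanujan's sum, Lemma~\ref{lembsum}) then gives \eqref{eq1thm2} directly, with no separate equidistribution lemma needed. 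Without this $n=m\cdot P(n)$ decomposition and the smooth-number truncation, your proposed route does not close.
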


\begin{theorem}[Minor Arcs]\label{thm3}
Let $\kappa<1/\sqrt{6}$ be a constant. Let $L=\log x$ and 
$$1 \le R \le \exp(\kappa \sqrt{\log x \log\log x}).$$ 
Then, for $h,a,q\in \mathbb{N}$, $h\le R$,
$R^{12} L^{26}<q\le \frac{x}{R^{11}L^{26}}$ and $|\alpha -a/q|\le 1/q^2$, where $(a,q)=1$, we have
$$
\sum_{n\in \mathcal{B}(x)} e(\alpha h n) \ll \frac{x }{R} .
$$
\end{theorem}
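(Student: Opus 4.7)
\textbf{Proof plan for Theorem \ref{thm3}.}

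The strategy is to adapt Vinogradov's bilinear-form method to $\mathcal{B}$, exploiting its recursive structure: every $n\in\mathcal{B}$ with $n>1$ has a canonical factorization $n=p_1\cdots p_k$ with $p_1\le\cdots\le p_k$ in which each prefix $p_1\cdots p_j$ again lies in $\mathcal{B}$. Set $M:=R^{11}L^{26}$; the hypothesis on $q$ then becomes $RM<q\le x/M$, so in particular $M\le x/R$. The contribution from $n\le M$ is trivially $\le M\le x/R$, and it suffices to treat $n>M$.

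For each such $n$ I would factor $n=dm$ by letting $d:=p_1\cdots p_{j(n)}$ be the shortest prefix exceeding $M$. Then $d\in\mathcal{B}$, $M<d\le MP(d)$, and every prime factor of $m$ is at least $P(d)$. A double dyadic partition ($d\sim D$ and $m\sim N$ with $DN\le x$) turns the sum into
\[
\sum_{d\sim D}\alpha_d\sum_{m\sim N}\beta_{d,m}\,e(\alpha h dm),
\]
with $\alpha_d,\beta_{d,m}\in\{0,1\}$ encoding the admissibility of the decomposition. Applying Cauchy--Schwarz in the shorter of the two variables, expanding the resulting square, and invoking the classical Diophantine bound
\[
\sum_{|r|\le Y}\min\!\bigl(Z,\|\alpha r\|^{-1}\bigr)\ll\Bigl(\tfrac{Y}{q}+1\Bigr)(Z+q\log q),
\]
which relies on $|\alpha-a/q|\le 1/q^2$, together with the ranges $RM<q\le x/M$, produces a combined estimate of size $(x/R)(\log x)^{O(1)}$. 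The high power $L^{26}$ in $M$ is calibrated to absorb the accumulated logarithmic losses from the double dyadic partition, the $L^2$ divisor-type counts $\sum_d\alpha_d$ and $\sum_m\beta_{d,m}^2\le\tau(m)$, the Cauchy--Schwarz step, and the factor $h\le R$ which distorts the effective denominator from $q$ down to $q/(q,h)\ge q/R>R^{11}L^{26}$.

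The main obstacle is that $\beta_{d,m}$ genuinely depends on $d$: the primes of $m$ must lie successively in intervals $I(\cdot)$ indexed by partial products that include $d$, so variables do not separate cleanly. I would handle this by using the crude bound $\beta_{d,m}\le 1$ when evaluating the diagonal piece after Cauchy--Schwarz (where only admissible-pair counts are needed), while retaining $\beta_{d,m}$ in the off-diagonal and using that the exponential $e(\alpha h d(m_1-m_2))$ provides cancellation once we sum $d$ over the interval $[D,2D]$. A secondary obstacle is the passage from the approximation $|\alpha-a/q|\le 1/q^2$ to one for $\alpha h$; the reduced denominator $q/(q,h)$ is at least $q/R$ and still dominates the other parameters by the hypothesis $q>R^{12}L^{26}$, which explains the exponent $12$ of $R$ in the lower bound on $q$.
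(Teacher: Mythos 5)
Your factorization is different from the paper's, and this difference creates a genuine gap. The paper writes each $n\in\mathcal{B}$, $n>1$, as $n=mp$ with $p=P(n)$ the \emph{largest} prime factor and $m=n/p\in\mathcal{B}$, so that the ``prime'' variable $p$ always carries a full prime variable. Your decomposition $n=dm$, with $d$ the shortest prefix exceeding $M$, instead places the prefix $d$ in $\mathcal{B}$ but allows $m=1$: every prime $n>M$, and more generally every $n$ with $n/P(n)\le M$, produces the degenerate pair $(d,m)=(n,1)$. In that regime your bilinear sum collapses to $\sum_{p>M}e(\alpha h p)$ (and close relatives), and the Cauchy--Schwarz/Diophantine-counting engine you invoke gives nothing: after Cauchy--Schwarz the off-diagonal vanishes because there is only one value of $m$, and you are left with the trivial bound. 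You need a separate Type~I input here --- an exponential-sum-over-primes estimate. The paper supplies exactly this via Lemma~\ref{lemprimes} (Vaughan's bound from \cite{Vau}) for the range of small $m$, and additionally dispatches the complementary degenerate regime (small $P(n)$, i.e.\ smooth $n$) via de Bruijn's smooth-number count, Lemma~\ref{lemPsi}. Your plan has no analogue of either step, and without them the argument cannot close.

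Two further points. First, the bilinear ``engine'' you describe --- Cauchy--Schwarz in the shorter variable followed by $\sum_{|r|\le Y}\min(Z,\|\alpha r\|^{-1})\ll(Y/q+1)(Z+q\log q)$ --- is exactly what is packaged as Lemma~\ref{lemminarc} (Iwaniec--Kowalski, Lemma~13.8); the paper simply quotes it, so this part of your proposal is not a new route but a re-derivation. Second, your handling of the coupling between $d$ and $m$ (the condition that the primes of $m$ lie in the $I(\cdot)$-intervals determined by prefixes involving $d$) is gestured at rather than resolved: bounding $\beta_{d,m}\le1$ on the diagonal while ``retaining $\beta_{d,m}$ in the off-diagonal'' is not a valid move, since the off-diagonal inner sum $\sum_d a_d\beta_{d,m_1}\overline{\beta_{d,m_2}}e(\alpha h d(m_1-m_2))$ then has a $d$-dependent coefficient and no longer reduces to the geometric sum needed for the Diophantine bound. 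The paper resolves the analogous coupling (the constraint $p\in\tilde I(m)$) by Harman's ``cosmetic surgery'' device, which separates variables at the cost of a single factor of $\log x$; you need something of that kind.

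In short: the Type~II range of your proposal is sound in outline and uses essentially the same mechanism as the paper, but you are missing both Type~I inputs (Vaughan for short $m$, de Bruijn for smooth $n$) and a rigorous variable-separation step, and without the Type~I input in particular the argument fails already for prime $n$.
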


Theorem \ref{thm3} leads to an estimate for the discrepancy of the sequence $(\{\alpha n\})_{n\in \mathcal{B}}$,
assuming the denominators of the continued fraction convergents $a_j/q_j$ of $\alpha$ don't grow too quickly.  
The irrationality measure of an irrational number $\alpha$ can be defined as 
$\mu(\alpha)=1+ \limsup_{j\ge 1} \frac{\log q_{j+1}}{\log q_j}$.
Every algebraic irrational number $\alpha$ satisfies $\mu(\alpha)=2$, 
as well as $\mu(e)=2$, while the constants $\pi, \pi^2,  \log 2,  \log 3$ and $\zeta(3)$ are all known to have finite irrationality measure \cite{Weis}.
For $0\le y \le 1$, define
$$
B(x,y,\alpha) := |\{n\in \mathcal{B}(x) : \{\alpha n\} \le y \}|.
$$

The following estimate for the discrepancy follows from Theorem \ref{thm3}
and the  Erd\H{o}s-Tur\'an inequality (see Lemma \ref{lemET}).
\begin{theorem}\label{thm4}
Let $\kappa<\frac{1}{\sqrt{6}}$ and assume $\alpha$ is irrational with finite irrationality measure. Then, for all $x \ge x_0(\alpha)$,
$$
\sup_{0\le y \le 1} |B(x,y,\alpha) - y B(x)| \ll \frac{x}{\exp(\kappa\sqrt{\log x \log\log x})} .
$$
In particular, the sequence $(\alpha n)_{n \in \mathcal{B}}$ is u.d. mod 1
provided $B(x)$ satisfies $B(x) \gg x \exp(-\kappa \sqrt{\log x \log\log x})$ for some $\kappa < \frac{1}{\sqrt{6}}$.
\end{theorem}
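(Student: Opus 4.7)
The plan is to apply the Erd\H{o}s-Tur\'an inequality (Lemma \ref{lemET}) to the finite sequence $(\alpha n)_{n\in\mathcal{B}(x)}$, thereby reducing the discrepancy to bounds on the exponential sums $\sum_{n\in\mathcal{B}(x)} e(h\alpha n)$ for small $h$, which Theorem \ref{thm3} delivers. Set $g(x)=\log x\log\log x$, fix $\kappa'\in(\kappa,1/\sqrt{6})$, and choose a truncation $H = \lfloor\exp(\kappa\sqrt{g(x)})\rfloor$ together with a slightly larger companion $R = \lfloor\exp(\kappa'\sqrt{g(x)})\rfloor$. The Erd\H{o}s-Tur\'an inequality then yields
\begin{equation*}
\sup_{0\le y\le 1}\bigl|B(x,y,\alpha) - yB(x)\bigr| \ll \frac{B(x)}{H} + \sum_{h=1}^{H}\frac{1}{h}\Bigl|\sum_{n\in\mathcal{B}(x)} e(h\alpha n)\Bigr|.
\end{equation*}

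The main obstacle will be to locate, once and for all (a single approximation works for every $h$, since the Diophantine hypothesis of Theorem \ref{thm3} concerns $\alpha$ rather than $h\alpha$), a rational $a/q$ with $(a,q)=1$, $|\alpha - a/q|\le 1/q^2$ and denominator in the prescribed window $(R^{12}L^{26},\,x/(R^{11}L^{26})]$. Dirichlet's approximation theorem applied with $Q = x/(R^{11}L^{26})$ immediately produces $a/q$ satisfying $(a,q)=1$, $1\le q\le Q$ and $|\alpha-a/q|\le 1/(qQ)\le 1/q^2$. To exclude the case $q\le R^{12}L^{26}$, I invoke the finite irrationality measure: fixing $\mu'>\mu(\alpha)$, there is a constant $C_\alpha>0$ with $|\alpha-a/q|\ge C_\alpha/q^{\mu'}$ for all $a\in\mathbb{Z}$ and $q\ge 1$. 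Combining this lower bound with the Dirichlet upper bound under the supposition $q\le R^{12}L^{26}$ would force $x \le C_\alpha^{-1} R^{12\mu'-1} L^{26\mu'-25} = \exp(O(\sqrt{g(x)})) = x^{o(1)}$, which fails once $x\ge x_0(\alpha)$. Hence the Dirichlet denominator satisfies $q > R^{12}L^{26}$ for large $x$.

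With $a/q$ placed in the required window, Theorem \ref{thm3} supplies $|\sum_{n\in\mathcal{B}(x)} e(h\alpha n)| \ll x/R$ uniformly for $1\le h\le H\le R$. Summing with the harmonic weights contributes $\ll x(\log H)/R$, which is absorbed by $x/H$ since $R/H = \exp((\kappa'-\kappa)\sqrt{g(x)})$ grows faster than any power of $\log x$. Combined with the trivial estimate $B(x)/H \le x/H$, this gives the claimed discrepancy bound. For the u.d. mod 1 consequence, suppose $B(x) \gg x\exp(-\kappa_0\sqrt{g(x)})$ for some $\kappa_0<1/\sqrt{6}$; choosing $\kappa\in(\kappa_0, 1/\sqrt{6})$ and dividing the discrepancy bound by $B(x)$ produces a quotient $\ll \exp(-(\kappa-\kappa_0)\sqrt{g(x)}) \to 0$, establishing uniform distribution modulo $1$.
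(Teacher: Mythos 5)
Your proof is correct and follows essentially the same strategy as the paper: Erd\H{o}s--Tur\'an plus Theorem \ref{thm3}, with the finite irrationality measure used to locate a single denominator $q$ in the prescribed window that serves simultaneously for every $h\le R$. The paper phrases the denominator-location step directly in terms of continued fraction convergents ($q_{j+1}\ll_\alpha q_j^{\mu}$ forces some $q_j$ into $(Q,x/Q]$) and applies Erd\H{o}s--Tur\'an with $m=\lfloor R\rfloor$, whereas you invoke Dirichlet's theorem with a liminf-type lower bound $|\alpha-a/q|\ge C_\alpha q^{-\mu'}$ and split the two roles between a truncation $H$ and a savings parameter $R$; these choices are equivalent in substance. (The exponent $L^{26\mu'-25}$ in your intermediate inequality should be $L^{26\mu'}$, but this slip is harmless since the quantity is still $x^{o(1)}$.)
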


\section{Derivation of Theorem \ref{thm1} from Theorems \ref{thm2} and \ref{thm3} }

By Weyl's criterion, we need to show that 
$$
T:=\frac{1}{B(x)} \sum_{n\in \mathcal{B}(x)} e(l \alpha n) \to 0 \quad (x\to \infty)
$$
for every fixed non-zero integer $l$. It suffices to consider $l=1$, since the following is equally
valid if $\alpha$ is replaced by $l \alpha$.
Let $(a_j/q_j)_{j\ge 1}$ be the sequence of continued fraction convergents of $\alpha$. 
Then $q_1<q_2<\ldots$ and 
$$
|\alpha - a_j/q_j | \le \frac{1}{q_j q_{j+1}} \quad (j\ge 1).
$$
Let $x$ be sufficiently large and let $R=(\log x)^{A+1}$ and $h=1$ in Theorem \ref{thm3}.
Define $Q=R^{12} L^{26}=L^{12A+38}$, where $L=\log x$.
If there is at least one $j$ such that $Q < q_j \le x/Q$,
Theorem \ref{thm3} (with $q=q_j$) shows that we have $T \ll x (\log x)^{-A-1} /B(x) \ll 1/\log x$. 

If there does not exist a $j$ such that $Q<q_j\le x/Q$, let $i$ be such that 
$q_i \le Q$ and $q_{i+1}> x/Q$. We have $|\alpha - a_i/q_i|\le 1/(q_i q_{i+1}) \le 1/q_{i+1} < Q/x$. 
Theorem \ref{thm2}, with $U=12A+38$ and $q=q_i$, shows that $T\ll q_i^{-\delta/2} $. 
As $x\to \infty$, $i\to \infty$ and $q_i\to \infty$. 
Thus $T\to 0$ as $x\to \infty$.

\section{Derivation of Corollary \ref{cor0} from Theorem \ref{thm1}}\label{cor0proof}

Let $\lambda=\frac{1}{\alpha}<1$. Since $(\lambda m)_{m\in \mathcal{B}}$ is u.d. mod 1, we have
\begin{equation*}
\begin{split}
\frac{1}{\alpha} B(x) = \lambda B(x) & \sim |\{m \in \mathcal{B}(x): 1-\lambda < \{\lambda m\} <1 \}| \\
& = |\{m \in \mathcal{B}(x): \exists n \in \mathbb{N},\lambda m< n < \lambda m +\lambda \}| \\
& = |\{m \in \mathcal{B}(x): \exists n \in \mathbb{N},  m=\lfloor \alpha n \rfloor \}| \\
& = |\{n \in \mathbb{N}: \lfloor \alpha n \rfloor \in \mathcal{B}(x)\}| .
\end{split}
\end{equation*}

\section{Derivation of Corollary \ref{cor1} from Theorem \ref{thm1}}

\begin{lemma}\label{lemBmult}
Assume $\theta$ satisfies \eqref{thetahyp} and $I(n)=[1,\theta(n)]$.
For $x, d\ge 2$, we have
$$
B(x) = \frac{c_\theta x}{\log x}\left( 1+O\left(\frac{1}{(\log x)^{1-\eta}}\right)\right), \quad B_d(x) \ll B(x) \frac{ \log 2d}{d } ,
$$
where $c_\theta>0$. 
\end{lemma}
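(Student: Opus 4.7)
The plan is to decompose $n \in \mathcal{B}$ with $n\ge 2$ according to its largest prime factor. Writing $n = mp$ with $p = P(n)$, membership in $\mathcal{B}$ becomes the pair of conditions $m \in \mathcal{B}$ and $P(m) \le p \le \theta(m)$. This yields a self-referential identity of the form
$$
B(x) = 1 + \sum_{m \in \mathcal{B}, \, m \le x/2} \bigl(\pi(\min(\theta(m), x/m)) - \pi(P(m)-1)\bigr),
$$
which, after summation by parts and the prime number theorem with classical error term, becomes an integral equation relating $B(x)/x$ to $B(y)/y$ for $y \le x$.

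First I would establish the asymptotic $B(x) \sim c_\theta x/\log x$ by iterating this integral equation. The upper bound $\theta(n) \le K n \exp((\log n)^\eta)$ with $\eta < 1$ gives $\log \theta(m) = \log m + O((\log m)^\eta)$, so the contribution from each $m$ is essentially $(\log(x/m) - \log P(m))/\log x$ plus an error of relative size $(\log x)^{\eta-1}$. Solving the equation inductively on $\log x$, the constant $c_\theta$ emerges as a convergent integral describing the density of admissible chains of prime factors, and the error term inherits the bound $(\log x)^{\eta - 1}$ directly from the excess factor $\exp((\log n)^\eta)$ in \eqref{thetahyp}. The lower bound $\theta(n) \ge \max(2,n)$ is used to ensure that the inner sum is non-degenerate, making the iteration stable.

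For the divisor bound $B_d(x) \ll B(x) (\log 2d)/d$, I would argue as follows. If $n \in \mathcal{B}(x)$ and $d \mid n$, write $n = de$ and observe that the ordered factorization of $n$ is obtained by interleaving the prime factors of $d$ (with multiplicity) into the ordered prime sequence of $e$. Using the sub-polynomial hypothesis $\theta(mn) \le C m^J \theta(n)$, one shows that the resulting $e$ belongs to a modified family $\mathcal{B}'$ whose counting function is $\ll B(x/d)$, uniformly in $d$. Summing over the at most $O(\log 2d)$ admissible positions for the interleaving and invoking the asymptotic $B(x/d) \ll (x/d)/\log(x/d)$ from the first part yields the stated bound, after comparing $\log(x/d)$ with $\log x$ in the regime $d \le \sqrt{x}$ and treating $d > \sqrt{x}$ by the trivial bound $B_d(x) \le x/d$.

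The main obstacle is that $\mathcal{B}$ is defined by an ordering condition on prime factors rather than a multiplicative condition, so its indicator function is not multiplicative and the Selberg--Delange machinery does not apply directly. The interplay between the two halves of \eqref{thetahyp} is delicate: the sub-exponential growth of $\theta(n)/n$ is precisely what is needed to make the integral equation converge to a $c_\theta x/\log x$ asymptotic rather than something larger, while keeping $\theta(n) \ge \max(2,n)$ ensures the iteration does not collapse. Tracking the excess $\exp((\log n)^\eta)$ through each round of the induction, without losing more than the claimed $(\log x)^{\eta - 1}$ relative error, is the technical heart of the argument.
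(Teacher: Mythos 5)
Your proposal diverges from the paper in both halves, and the second half has a concrete gap.

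For the asymptotic $B(x) = c_\theta x/\log x(1+O((\log x)^{\eta-1}))$, the paper simply cites \cite[Thm.~4]{Tau}. You instead sketch a from-scratch proof via decomposition by largest prime factor, leading to an integral equation that you iterate. This is in the right spirit (it is roughly how such results are proved), but it is a substantial undertaking: the existence of the limit $c_\theta>0$ and the propagation of the $(\log x)^{\eta-1}$ relative error through each round of the iteration are exactly the technical content of the cited theorem, and your sketch does not engage with the stability of the iteration at a level that would let one check it. As an alternative route it is plausible but far from complete.

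For $B_d(x)\ll B(x)\log(2d)/d$, your plan has a genuine flaw. You correctly begin by fixing $d$, writing $n=de$, and observing (via $\theta(mn)\le Cm^J\theta(n)$) that $e$ lands in a modified family; this matches the paper, which invokes \cite[Lemma~8]{SW} to get $\{m:md\in\mathcal{B}_\theta\}\subset\mathcal{B}_{\theta_d}\subset\mathcal{B}_{Cd^J\theta}$. But you then assert that this modified family has counting function ``$\ll B(x/d)$, uniformly in $d$'' and recover the $\log 2d$ factor by ``summing over the at most $O(\log 2d)$ admissible positions for the interleaving.'' Both steps are wrong. First, $e=n/d$ is uniquely determined by $n$ and $d$, so there is no sum over interleaving positions --- the ordered prime sequence of $e$ is obtained by deleting the primes of $d$ from that of $n$, with no freedom. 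Second, uniformity in $d$ fails: the family $\mathcal{B}_{Cd^J\theta}$ genuinely grows with $d$, since the threshold is enlarged by the factor $Cd^J$, and its counting function is not $\ll B(x/d)$ with an absolute constant. The $\log 2d$ factor in the stated bound is precisely the price for this enlargement; it comes from controlling $|\mathcal{B}_{\lambda\theta}(y)|$ in terms of $\log(2\lambda)$ (with $\lambda=Cd^J$), which is what \cite[Prop.~1]{PW} provides. Your argument attributes the $\log 2d$ to a combinatorial count that does not exist and suppresses the dependence on $d$ where it actually lives, so as written it does not yield the bound.

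\end{document}
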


\begin{proof}
The first claim follows from \cite[Thm. 4]{Tau}. 
Lemma 8 of \cite{SW} shows that
$\{m: md \in \mathcal{B}_\theta\} \subset \mathcal{B}_{\theta_d}$
where $\theta_d(n) \le \theta(dn)$ for all $n\ge1$. 
Since $\theta(dn) \le C d^J \theta(n)$ for some constants $C, J$, by \eqref{thetahyp},
$$
\{m \le x/d : md \in \mathcal{B}_\theta\} \subset \{ m\le x/d : m \in \mathcal{B}_{Cd^J\theta} \}.
$$ 
The result now follows from \cite[Prop. 1]{PW}.
\end{proof}

\section{Lemmas for the major arcs}

Let $P(n)$ denote the largest prime factor of $n\ge 2$ and put $P(1)=1$. 
The following estimate follows from \cite[Eqs. (1.3), (1.4), (1.5)]{deBru}. 

\begin{lemma}[de Bruijn]\label{lemPsi}
For $y \ge  (\log x)^2$ and $u=\frac{\log x}{ \log y}\ge 1$, we have
$$
\Psi(x,y):=\sum_{n\le x \atop P(n) \le y} 1 \ll x \exp(-u\log u).
$$
\end{lemma}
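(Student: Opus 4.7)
The statement is essentially a repackaging of de Bruijn's classical estimate for $\Psi(x,y)$ in terms of the Dickman function $\rho$, so my plan is to combine two ingredients from the cited paper: (i) the asymptotic $\Psi(x,y) = x\rho(u)(1+o(1))$, which holds uniformly in the range $y \ge (\log x)^{1+\varepsilon}$ (and hence whenever $y \ge (\log x)^2$), and (ii) the asymptotic $\log \rho(u) = -u(\log u + \log\log u - 1 + o(1))$ as $u \to \infty$. Given these, the lemma reduces to the purely elementary claim that $\rho(u) \ll \exp(-u\log u)$ uniformly for $u \ge 1$.

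To verify that elementary claim, I would split the range at some absolute threshold $u_0$. From (ii),
$$
\frac{\rho(u)}{\exp(-u\log u)} = \exp\bigl(-u\log\log u + u + o(u)\bigr),
$$
which tends to $0$ as $u \to \infty$; hence $\rho(u) \le \exp(-u\log u)$ for $u \ge u_0$, once $u_0$ is chosen sufficiently large. On the complementary range $1 \le u \le u_0$, both $\rho(u)$ and $\exp(-u\log u)$ are continuous and bounded strictly away from zero, so $\rho(u) \ll_{u_0} \exp(-u\log u)$ holds trivially. Combining the two pieces gives the desired uniform bound, and feeding it into (i) yields $\Psi(x,y) \ll x\exp(-u\log u)$ in the stated range.

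There is no real obstacle: the only substantive content is the citation of de Bruijn's estimates, and one should note that the inequality $\rho(u) \le \exp(-u\log u)$ fails for some small $u > 1$ (e.g.\ near $u=3/2$), which is why the conclusion is phrased with $\ll$ rather than $\le$ and why absorbing an absolute constant in the bounded range is necessary. The hypothesis $y \ge (\log x)^2$ is comfortably inside the uniformity window of de Bruijn's asymptotic, so no further analytic work is required.
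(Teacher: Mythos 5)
Your step (ii) and the elementary deduction in step (iii) are fine, but step (i) is a genuine gap: the asymptotic $\Psi(x,y)=x\rho(u)(1+o(1))$ does \emph{not} hold uniformly for $y\ge(\log x)^{1+\varepsilon}$. The widest unconditional range in which that asymptotic is known is Hildebrand's, namely $y\ge\exp((\log\log x)^{5/3+\varepsilon})$, which is superpolylogarithmic in $x$ and therefore does not contain $y=(\log x)^2$. Worse, Hildebrand also proved that the validity of $\Psi(x,y)\sim x\rho(u)$ uniformly down to $y\ge(\log x)^{2+\varepsilon}$ is \emph{equivalent} to the Riemann Hypothesis, so the ingredient you invoke is essentially RH-strength in the range you need and certainly not available as a citation from de Bruijn's paper. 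Since $u=\log x/\log y\to\infty$ when $y=(\log x)^2$ and $x\to\infty$, there is no ``small $u$'' escape either.

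What the paper is actually citing (Eqs.\ (1.3)--(1.5) of de Bruijn, equivalently the form later sharpened by Canfield--Erd\H{o}s--Pomerance) is the much weaker but much wider-range statement
\[
\log\frac{\Psi(x,y)}{x}=-u\Bigl(\log u+\log\log u-1+O\Bigl(\frac{\log\log u}{\log u}\Bigr)\Bigr),
\]
valid for, say, $\log x<y\le x$ with $u\to\infty$. This is an asymptotic for $\log\Psi$, not for $\Psi$ itself, which is exactly why it survives into the range $y=(\log x)^2$ where the $\rho(u)$ approximation breaks down. Feeding this directly into your computation in step (iii) --- since $u\log\log u\ge u$ once $u$ is large, the exponent is $\le -u\log u$ for $u\ge u_0$, and the bounded range $1\le u\le u_0$ is handled by absorbing a constant --- gives the lemma. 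So the fix is to replace your ingredient (i) with de Bruijn's bound on $\log\Psi(x,y)$; the detour through the Dickman $\rho$ asymptotic for $\Psi$ itself is not available here.
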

A key ingredient for the major arcs is the Siegel-Walfisz theorem (see \cite[Thm. 6.9 and Cor. 11.21]{MV}). 
\begin{lemma}[Siegel-Walfisz]\label{lemSW}
There exists a constant $c_1>0$ such that the following holds. Let $A>0$. For $q\le (\log x)^A$ and $(a,q)=1$,
$$
\pi(x,a,q):= \sum_{p\le x \atop p\equiv a \bmod q} 1 = \frac{\pi(x)}{\varphi(q)} +O_A(x e^{-c_1 \sqrt{\log x}}),
$$
where $\pi(x):=\pi(x,0,1)$ and $\varphi$ is Euler's totient function.
\end{lemma}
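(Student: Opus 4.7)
The plan is to follow the classical route through Dirichlet characters, the explicit formula, and Siegel's theorem on exceptional zeros. First I would reduce to the von Mangoldt weighted count
$$
\psi(x,a,q) := \sum_{n \le x \atop n \equiv a \bmod q} \Lambda(n),
$$
since higher prime powers contribute only $O(\sqrt{x}\log x)$ and Abel summation converts between $\psi$ and $\pi$ with an error comfortably smaller than $x \exp(-c_1 \sqrt{\log x})$. Orthogonality of Dirichlet characters modulo $q$ gives
$$
\psi(x,a,q) = \frac{1}{\varphi(q)} \sum_{\chi \bmod q} \bar\chi(a)\, \psi(x,\chi), \qquad \psi(x,\chi) := \sum_{n \le x} \chi(n) \Lambda(n),
$$
and the principal character yields $\psi(x)/\varphi(q)$, supplying the main term $\pi(x)/\varphi(q)$ with error $O(x\exp(-c\sqrt{\log x}))$ via the ordinary Prime Number Theorem.

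The real work is to show $\psi(x,\chi) \ll_A x \exp(-c_1 \sqrt{\log x})$ for each nonprincipal $\chi \bmod q$, uniformly in $q \le (\log x)^A$. For this I would apply the truncated Riemann--von Mangoldt explicit formula for $L(s,\chi)$,
$$
\psi(x,\chi) = -\sum_{\rho : |\Im \rho| \le T} \frac{x^\rho}{\rho} + O\!\left( \frac{x (\log qT)^2}{T}\right),
$$
with $T = \exp(c'\sqrt{\log x})$, and invoke the Landau--Page zero-free region $\sigma > 1 - c/\log(q(|t|+2))$, which holds for every nonprincipal $\chi \bmod q$ with at most one exception: a single real zero $\beta < 1$ attached to a real character. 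Combined with the standard density estimate $N(T,\chi) \ll T \log(qT)$ for zeros up to height $T$, this yields the required decay, provided the exceptional zero is absent.

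The main obstacle is precisely that potential Siegel zero. To neutralise its contribution I would appeal to Siegel's theorem: for every $\varepsilon > 0$ there is an ineffective constant such that any real zero $\beta$ of $L(s,\chi)$ satisfies $1-\beta \gg_\varepsilon q^{-\varepsilon}$. Taking $\varepsilon$ small enough in terms of $A$ turns the exceptional contribution $x^\beta/\beta$ into $O(x \exp(-c_1 \sqrt{\log x}))$ uniformly for $q \le (\log x)^A$; this is the source both of the ineffectivity of the implied constant $c_1$ and of the ceiling $q \le (\log x)^A$ on the allowed range. Summing the bound $|\psi(x,\chi)| \ll x \exp(-c_1 \sqrt{\log x})$ over the $\varphi(q)-1 \le (\log x)^A$ nonprincipal characters is absorbed by shrinking $c_1$ slightly, and Abel summation transfers the resulting asymptotic for $\psi(x,a,q)$ to the stated asymptotic for $\pi(x,a,q)$.
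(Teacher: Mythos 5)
The paper does not actually prove this lemma; it simply cites it as \cite[Thm.\ 6.9 and Cor.\ 11.21]{MV}, so there is no proof in the text to compare against. Your sketch is a correct outline of the standard argument behind the Siegel--Walfisz theorem: reduce $\pi(x,a,q)$ to $\psi(x,a,q)$, expand by orthogonality of Dirichlet characters, control $\psi(x,\chi)$ for nonprincipal $\chi$ via the truncated explicit formula together with the Landau--Page zero-free region and the density bound $N(T,\chi)\ll T\log qT$, and finally tame the possible exceptional real zero with Siegel's theorem, taking $\varepsilon<1/(2A)$ so that $x^{\beta}\ll x\exp(-c(\log x)^{1-A\varepsilon})\ll x\exp(-c_1\sqrt{\log x})$ uniformly for $q\le(\log x)^A$; this is exactly where the ineffectivity of $c_1$ and the restriction on the range of $q$ come from. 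This is the same route taken in the Montgomery--Vaughan reference, so your proposal matches the source the paper relies on.
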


\begin{lemma}[Ramanujan's Sum]\label{lembsum}
Let $a,q,d \in \mathbb{N}$ with $(a,q)=1$ and $d|q$. Then
$$
\sum_{n=1 \atop (n,q)=d}^q e(n a/q) = \mu(q/d),
$$
where $\mu$ is the M\"obius function.
\end{lemma}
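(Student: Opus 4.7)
The plan is to reduce the sum to the classical Ramanujan sum $c_{q'}(a):=\sum_{m=1,(m,q')=1}^{q'} e(ma/q')$ with $q'=q/d$, and then evaluate it by Möbius inversion.

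First I would change variables by writing $n=dm$. The conditions $1\le n\le q$ and $(n,q)=d$ become $1\le m\le q/d$ and $(m,q/d)=1$, using that $d\mid q$. Since $e(na/q)=e(ma/(q/d))$, the sum becomes
$$
\sum_{n=1,\,(n,q)=d}^{q} e(na/q) = \sum_{m=1,\,(m,q')=1}^{q'} e(ma/q') = c_{q'}(a),
$$
where $q'=q/d$. Note that $(a,q')=1$ follows from $(a,q)=1$ and $q'\mid q$.

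Next I would evaluate $c_{q'}(a)$ under the assumption $(a,q')=1$. Using $\sum_{e\mid (m,q')}\mu(e)=\mathbf{1}_{(m,q')=1}$ and swapping the order of summation, one obtains
$$
c_{q'}(a) = \sum_{e\mid q'} \mu(e) \sum_{\substack{m=1 \\ e\mid m}}^{q'} e(ma/q') = \sum_{e\mid q'} \mu(e) \sum_{m'=1}^{q'/e} e(m' a/(q'/e)).
$$
The inner geometric sum equals $q'/e$ if $(q'/e)\mid a$, and vanishes otherwise. Because $(a,q')=1$, the divisibility $(q'/e)\mid a$ forces $q'/e=1$, i.e.\ $e=q'$, contributing $\mu(q')\cdot 1$. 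Hence $c_{q'}(a)=\mu(q/d)$, which is the claimed identity.

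There is no real obstacle here; the only point that requires any care is to verify that the coprimality hypothesis $(a,q)=1$ descends to $(a,q')=1$, so that the geometric-sum collapse at the end really isolates the single term $e=q'$. Everything else is the standard derivation of Ramanujan's formula $c_{q'}(a)=\mu(q'/(q',a))\varphi(q')/\varphi(q'/(q',a))$ specialized to the coprime case.
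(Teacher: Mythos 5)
Your proof is correct and follows essentially the same route as the paper: substitute $n=dm$ to reduce the sum to the classical Ramanujan sum $c_{q'}(a)$ with $q'=q/d$, then use $(a,q')=1$ to conclude $c_{q'}(a)=\mu(q')$. The only difference is that the paper cites Montgomery--Vaughan for the evaluation $c_{q'}(a)=\mu(q')$, while you supply the short M\"obius-inversion computation yourself; that computation is correct.
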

\begin{proof}
Writing $n'=n/d$ and $q'=q/d$ we have $(a,q')=1$ and 
$$
\sum_{n=1 \atop (n,q)=d}^q e(n a/q) = \sum_{n'=1 \atop (n',q')=1}^{q'} e(n' a/q') = \mu(q')=\mu(q/d).
$$
The last sum is called Ramanujan's sum and is evaluated in \cite[Thm. 4.1]{MV}.
\end{proof}

\section{Proof of Theorem \ref{thm2}}
If $n \in \mathcal{B}$, $n>1$, we write $n=mp$ where $p=P(n)$ and $m\in \mathcal{B}$. 
We have
$$
f(\alpha) := \sum_{n\in \mathcal{B}(x)} e(\alpha n) 
=e(\alpha)+ \sum_{m\in \mathcal{B}(x)} \sum_{p\in J(x,m)} e(\alpha p m)
$$
where $J(x,m)$ is the (possibly empty) union of a bounded number of intervals 
$$J(x,m):=\{y\in \mathbb{R}: P(m)\le y \le x/m\} \cap I(m).$$
Define $g(x):=\log x \log\log x$. 
If $E_0$ denotes the contribution to $f(\alpha)$ from primes $p\le \exp(g(x)^{2/3})$, then 
$$
|E_0| \le \Psi(x, \exp(g(x)^{2/3}) )\ll x \exp(-g(x)^{1/3}/4),
$$ 
by Lemma \ref{lemPsi}. 
Thus we may replace $J(x,m)$ by
$$
\tilde{J}(x,m):=J(x,m) \cap (\exp(g(x)^{2/3}) , \infty).
$$
Note that if $mp\equiv b \bmod q$ where $(b,q)=d$ and the prime $p$ satisfies $p>q$, then $(m,q)=d$.
In this case we write $q'=q/d$, $b'=b/d$ and $m'=m/d$.  
For $r$ with $(r,q)=1$, let $\bar{r}$ be such that $\bar{r} r \equiv 1 \bmod q$. 

We have
\begin{equation*}
\begin{split}
f(a/q+\beta) & =E_0+ \sum_{d|q} \sum_{b=1 \atop (b,q)=d}^q 
\sum_{m\in \mathcal{B}(x) \atop (m,q)= d}
\sum_{p \in \tilde{J}(x,m) \atop mp\equiv b \bmod q} 
 e(({\textstyle \frac{a}{q}}+\beta) p m) \\
&  =E_0+ \sum_{d|q} \sum_{b=1 \atop (b,q)=d}^q e({\textstyle \frac{a}{q}} \, b) 
\sum_{m\in \mathcal{B}(x) \atop (m,q)= d}
\sum_{p\in \tilde{J}(x,m) \atop p\equiv b' \overline{m'} \bmod q'} 
 e(\beta p m) .
\end{split}
\end{equation*}
Since $p\ge \exp(g(x)^{2/3})$, we have $q \le (\log x)^U \ll (\log p)^{3U/2}$.
By the Siegel-Walfisz theorem, in the form of Lemma \ref{lemSW}, the sum over $p$ is
$$
\int_{\rho \in \tilde{J}(x,m) } e(\beta \rho m) d(\pi(\rho, b' \overline{m'} ,q')) 
= \int_{\rho \in \tilde{J}(x,m)} e(\beta \rho m) d(\pi (\rho)/\varphi(q') + E_1),
$$
where $E_1 \ll \rho e^{-c_1\sqrt{\log \rho}}$.  Let $0<c<c_1/3$. 
Integration by parts applied to the error term shows that 
$$
\sum_{p\in \tilde{J}(x,m) \atop p\equiv b' \overline{m'} \bmod q'} e(\beta p m)
=\frac{1}{\varphi(q')}\sum_{p\in \tilde{J}(x,m)} e(\beta p m)+E_2,
$$
where
$$
E_2 \ll \frac{x}{m} e^{-c_1 \sqrt{\log x/m}} +|\beta| x \int_{\tilde{J}(x,m)}e^{-c_1 \sqrt{\log \rho}} d \rho =: E_3+E_4,
$$
say.
Since $m\le x \exp(-g(x)^{2/3})$ if $\tilde{J}(x,m)$ is non-empty, $E_3$ contributes
$$
\ll q \sum_{m\le x} \frac{x}{m} \exp(-c_1 \sqrt[3]{g(x)})  \ll x\exp(-2c \sqrt[3]{g(x)})=: E_5.
$$
The contribution from $E_4$ is
$$
\ll q |\beta| x \sum_{m\le x} \int_{\tilde{J}(x,m)} e^{-c_1 \sqrt{\log \rho}} d \rho 
\le  q |\beta| x \sum_{m\le x} \frac{x}{m}\exp(-c_1 \sqrt[3]{g(x)})  \ll E_5,
$$
since $|\beta|x \le \exp(c \sqrt[3]{g(x)})$ and $c<c_1/3$. 
Thus 
$$
f(a/q+\beta)=
  \sum_{d|q} \sum_{b=1 \atop (b,q)=d}^q e({\textstyle \frac{a}{q}}b)
\sum_{m\in \mathcal{B}(x) \atop (m,q)= d}
 \frac{1}{\varphi(q/d)}\sum_{p\in \tilde{J}(x,m) } e(\beta p m) + O(E_5).
$$
The two inner sums are now independent of $b$. Thus we can evaluate the sum over $b$
with Lemma \ref{lembsum}. 
We combine the two inner sums, writing $n=mp$, and put back the contribution 
from $P(n)\le \exp(g(x)^{2/3})$, with an error $\ll  E_5$.
Thus
$$
f(a/q+\beta) = \sum_{d|q}  \frac{\mu(q/d)}{\varphi(q/d)} \sum_{n \in \mathcal{B}(x) \atop (n,q)= d} e(\beta n) + O( E_5),
$$
which is \eqref{eq1thm2}. Since $|\mu(q/d)|\le 1$ and $|e(\beta n)|\le 1$, the modulus of the main term is  
$$
\le   \sum_{d|q}  \frac{1}{\varphi(q/d)} \sum_{n \in \mathcal{B}(x) \atop (n,q)=d} 1
\le \frac{1}{\varphi(q)}  \sum_{d|q}  d B_d(x).
$$
With the assumption $B_d(x) \ll d^{-\delta} B(x)$, where $0<\varepsilon < \delta \le 1$, this is
$$
\ll \frac{ B(x)}{\varphi(q)} \sum_{d|q}  d^{1-\delta}
\ll  \frac{ B(x)}{\varphi(q)} q^{1-\delta}  \sum_{d|q}1\ll \frac{ B(x)}{q^{\delta-\varepsilon}} .
$$

\section{Lemmas for the minor arcs}

\begin{lemma}\label{lemminarc}
Assume $|a_n|, |b_n| \le 1$ for $n\ge 1$ and $h,a,q \in \mathbb{N}$. For $|\alpha - a/q| \le 1/q^2$ with $(a,q)=1$, we have
$$\mathop{\sum_{n > N } \sum_{m> M}}_{mn \le x}  a_n b_m e(\alpha h n m) \ll 
\left( \frac{hx}{M}+\frac{x}{N}+\frac{hx}{q}+q\right)^{\frac{1}{2}} (hx)^{\frac{1}{2}} (\log 2hx)^2.
$$
\end{lemma}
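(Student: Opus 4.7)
The plan is to reduce Lemma~\ref{lemminarc} to the classical Vinogradov bilinear exponential sum bound via a dyadic decomposition together with a change of variable that replaces the frequency $\alpha h$ by $\alpha$. Since $\alpha h n m = \alpha (h n) m$, I first substitute $n' = hn$, rewriting the sum as $\sum_{n',m} c_{n'} b_m\, e(\alpha n' m)$ with $c_{n'}$ supported on multiples of $h$ in $(hN, hx/M]$. The Diophantine hypothesis $|\alpha - a/q| \le 1/q^2$ now applies directly. Extending $c_{n'}$ to all integers by zero-padding off the multiples of $h$ preserves the bound $|c_{n'}|\le 1$, and the constraint $m n \le x$ becomes $mn' \le hx$.

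Next, I would dyadically decompose $S = \sum_{M_0, N_0} S_{M_0, N_0}$, where $M_0, N_0$ run over powers of $2$ satisfying $M_0 \ge M$, $N_0 \ge hN$, and $M_0 N_0 \le hx$, with each $S_{M_0, N_0}$ restricted to the box $M_0 < m \le 2M_0$, $N_0 < n' \le 2N_0$ together with the hyperbolic condition $mn' \le hx$. The number of nonempty pairs is $O((\log 2hx)^2)$. To each box I apply the standard Vinogradov bilinear bound, proved by one Cauchy-Schwarz in the $n'$-variable followed by the classical estimate $\sum_{k \le K} \min(A, 1/\|\alpha k\|) \ll (K/q + 1)(A + q)\log(2qA)$, which yields
$$
|S_{M_0, N_0}| \ll \sqrt{M_0 N_0\,\bigl(M_0 + N_0 + M_0 N_0/q + q\bigr)}\,\log(2hx).
$$
Using $M_0 \le x/N$ (which follows from $N_0 \ge hN$ and $M_0 N_0 \le hx$), $N_0 \le hx/M$, and $M_0 N_0 \le hx$, each term inside the square root is majorized by the corresponding term of the claimed estimate, while the leading factor is $\le \sqrt{hx}$. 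Summing the dyadic pieces by the triangle inequality then yields the stated bound.

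The main obstacles I expect are the treatment of boundary dyadic boxes (where the constraint $mn' \le hx$ actively cuts off a corner of the rectangle) and the logarithmic bookkeeping. Boundary boxes should be handled by applying the same Cauchy-Schwarz argument with the truncated inner range, producing the same bound up to an absolute constant. The logarithmic power is the more delicate point: each box contributes a factor of $\log(2hx)$ from the Vinogradov bilinear bound, and a naive triangle inequality over $O((\log 2hx)^2)$ dyadic pieces would give $(\log 2hx)^3$; achieving the stated $(\log 2hx)^2$ will likely require either a logarithm-free form of the Vinogradov bilinear bound (obtained by slightly more careful orthogonality), or a final Cauchy-Schwarz over the dyadic sum to absorb one log factor.
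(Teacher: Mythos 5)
Your opening reduction is exactly the paper's entire proof: the substitution $n'=hn$, $x'=hx$, $N'=hN$, with coefficients $c_{n'}=a_{n'/h}$ supported on multiples of $h$, is precisely what the paper does, after which it simply cites \cite[Lemma 13.8]{IK}, which is the $h=1$ case of the lemma verbatim (same hyperbolic summation region, same $(\log 2x)^2$ factor). Your bookkeeping that $M_0 \le x/N$, $N_0 \le hx/M$, $M_0N_0 \le hx$ correctly shows how the change of variable produces the stated bound from the $h=1$ bound, so the reduction is sound.

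Where you diverge is in then attempting to reprove the $h=1$ case from scratch by a double dyadic decomposition plus Cauchy--Schwarz in one variable. This is more work than the paper does, and you correctly flag the resulting difficulty yourself: a double dyadic decomposition produces $O\bigl((\log 2hx)^2\bigr)$ boxes, and the box-level bilinear bound via $\sum_{k\le K}\min(A,1/\|\alpha k\|)$ carries its own logarithm, so the naive triangle inequality yields $(\log 2hx)^3$, not $(\log 2hx)^2$. You suggest two possible repairs (a log-free box estimate, or a final Cauchy--Schwarz over the dyadic pieces) but carry out neither, so as written there is a genuine gap of one logarithm. The standard way to get $(\log 2x)^2$ in IK Lemma 13.8 is to decompose dyadically in only \emph{one} variable (absorbing the hyperbolic constraint $mn\le x$ into the inner range before Cauchy--Schwarz), paying one log there and one log in the diagonal/geometric-series estimate; the double dyadic decomposition is what costs you the extra factor. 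The cleanest and shortest route is what the paper does: perform the $n'=hn$ substitution and then invoke \cite[Lemma 13.8]{IK} as a black box rather than reproving it.
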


\begin{proof}
If $h=1$, this is \cite[Lemma 13.8]{IK}. The general case follows from this case with $n'=hn$, $x'=hx$, $N'=hN$, $a'_{n'}=a_{n'/h}$
if $h|n'$ and $a'_{n'}=0$ otherwise.
\end{proof}

\begin{lemma}\label{lemprimes}
Let $h,m,a,q \in \mathbb{N}$. For $|\alpha - a/q| \le 1/q^2$ with $(a,q)=1$, we have
$$
\sum_{p\le x/m} e(\alpha hmp) 
\ll (\log 2hx)^7 \left(hx q^{-\frac{1}{2}} + (h^2 mx)^{\frac{4}{5}} + (hxq)^{\frac{1}{2}} \right).
$$
\end{lemma}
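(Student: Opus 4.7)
The plan is to apply Vaughan's identity to the von Mangoldt function and use Lemma \ref{lemminarc} on the resulting bilinear (Type II) sum, handling the Type I pieces by a standard Diophantine estimate. Set $y=x/m$. A routine partial-summation/prime-powers argument reduces the problem to bounding
$$
S:=\sum_{n\le y}\Lambda(n)\,e(\alpha hm n),
$$
at the cost of a harmless $\log$ factor and an $O(\sqrt{y})$ error from prime powers. Vaughan's identity with parameters $U,V$ to be chosen decomposes $S$ as two Type I sums of the form
$$
\sum_{d\le D}c_d\sum_{k\le y/d}e(\alpha hm dk),\qquad D\le UV,\ |c_d|\ll\log y,
$$
plus a Type II sum
$$
S_{II}=\sum_{U<n}\sum_{\substack{V<k\\ nk\le y}}a_n b_k\,e(\alpha hm nk),\qquad |a_n|,|b_k|\ll\log y,
$$
after the standard dyadic/Shiu dissection used to reduce the divisor-type coefficient $a_n=\sum_{d\mid n,d\le U}\mu(d)$ to bounded form.

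For the Type I bound, the inner geometric series is $\ll\min(y/d,\|\alpha hm d\|^{-1})$. Since the exponential carries $\alpha hm$ rather than $\alpha$, the hypothesis $|\alpha-a/q|\le 1/q^2$ is not directly usable, so I would substitute $e=hm d$ and drop the divisibility $hm\mid e$ to obtain
$$
\sum_{d\le UV}\min(y/d,\|\alpha hm d\|^{-1})\le\sum_{e\le hm UV}\min(hx/e,\|\alpha e\|^{-1}),
$$
using $y\cdot hm=hx$. A standard Diophantine estimate then bounds the right-hand side by $\ll\bigl(hx/q+hm\,UV+q\bigr)\log(2hxq)$.

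For the Type II bound, I would apply Lemma \ref{lemminarc} with $hm$ playing the role of $h$, with $y$ playing the role of $x$, and with $N=U$, $M=V$; since $hm\cdot y=hx$, this gives
$$
S_{II}\ll\bigl(hx/V+y/U+hx/q+q\bigr)^{1/2}(hx)^{1/2}(\log 2hx)^2.
$$
Choosing $U=V$ and balancing the leading $U$-dependent terms, $hm\,U^2$ from Type I and $hx/U^{1/2}$ from Type II, suggests $U=(x/m)^{2/5}$, which makes both of these contributions $\ll h\,m^{1/5}x^{4/5}\le(h^2 mx)^{4/5}$. The auxiliary Type II term $(hxy/U)^{1/2}$ is handled likewise, while $(hx/q)^{1/2}(hx)^{1/2}=hx/q^{1/2}$ and $q^{1/2}(hx)^{1/2}=(hxq)^{1/2}$ furnish the other two terms in the statement, and the Type I remainders $hx/q$ and $q$ are absorbed into these (assuming $q\le hx$, outside which the bound is trivial). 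Collecting powers of $\log(2hx)$ from each estimate produces the $(\log 2hx)^7$ prefactor.

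The main obstacle is precisely the Type I step, where the exponential involves $\alpha hm$ but only the Diophantine approximation of $\alpha$ is given. The substitution $e=hm d$ bridges this at the price of enlarging the summation range from $UV$ to $hm\,UV$; this is the loss that replaces the sharper $(x/m)^{4/5}$ (which would be obtained from a genuine convergent of $\alpha hm$ of denominator $\sim q$) with the looser $(h^2 mx)^{4/5}$ appearing in the statement.
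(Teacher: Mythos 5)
Your proof takes a genuinely different route from the paper's. The paper's proof of this lemma is a one-line citation: it invokes Theorem~1 of Vaughan's 1977 paper \emph{On the distribution of $\alpha p$ modulo 1} with the substitutions $H=hm$, $N=x/m$, $D\le H$, and removes the weight $\log p$ by partial summation. You instead reconstruct that result from scratch: Vaughan's identity, then Lemma~\ref{lemminarc} for the Type~II piece (with $hm$ in the role of $h$ and $y=x/m$ in the role of $x$) and the standard Diophantine bound $\sum_{e\le E}\min(X/e,\|\alpha e\|^{-1})\ll(X/q+E+q)\log(2Xq)$ for the Type~I pieces after the substitution $e=hmd$. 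The balancing $U=V=(x/m)^{2/5}$ is correct and recovers the $(h^2mx)^{4/5}$ term, and your closing remark about where the loss from the $hm$-twist enters is accurate; in effect you are reproving Vaughan's Theorem~1 using only the tools available in this paper, which buys self-containedness at the cost of some technical care that the citation avoids. The one point you gloss over that needs fixing: the Type~II coefficient $a_n=\sum_{d\mid n,\,d\le U}\mu(d)$ is bounded by $\tau(n)$, not by $\log y$ as you assert, so Lemma~\ref{lemminarc} (which assumes $|a_n|\le 1$) does not apply as stated. The standard repair is either a Cauchy--Schwarz step against $\sum_{n\le y}\tau(n)^2\ll y(\log y)^3$, or passing to the $L^2$-normalised form of the bilinear estimate (as in \cite[Lemma~13.8]{IK} itself, which is stated with $\|a\|\,\|b\|$) after a dyadic split; either costs extra powers of $\log$ that the generous exponent $7$ absorbs. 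With that repair your argument is sound, and it amounts to a reconstruction of Vaughan's original proof.
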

\begin{proof}
This follows from \cite[Thm. 1]{Vau}, with the substitutions $H=hm$, $N=x/m$, $D\le H=hm$. 
The factor $\log p$ in that result can be removed with partial summation.
\end{proof}

\section{Proof of Theorem \ref{thm3}}

If $n \in \mathcal{B}$, $n>1$, we write $n=mp$ where $p=P(n)$ and $m\in \mathcal{B}$. 
We have
\begin{equation*}
\begin{split}
\sum_{n\in \mathcal{B}(x)} e(\alpha h n) &
=e(\alpha h) +  \sum_{m\in \mathcal{B}(x) } \sum_{P(m)\le p \le x/m \atop p \in I(m)}e(\alpha h m p ).
\end{split}
\end{equation*}
We write $L:=\log x$. 
Lemma \ref{lemPsi} shows that when $R=\exp(\kappa \sqrt{\log x \log\log x})$, $\kappa<1/\sqrt{6}$, then 
$ \Psi(x, R^3 L^{6}) \ll x/R$. This upper bound must also hold for any smaller value of $R$. 
Thus the contribution from $p\le R^3 L^{6}$ is $\ll x/R$. 

By Lemma \ref{lemprimes}, the contribution from $m\le R^4 L^{6}$ is 
\begin{equation*}
\begin{split}
\le & \sum_{m\le R^4 L^{6}} \Bigl|\sum_{P(m)\le p\le x/m \atop p \in I(m)} e(\alpha hmp)\Bigr| \\
\ll &   R^4 L^{6} L^7 \left(hx q^{-\frac{1}{2}} + x^{\frac{4}{5}+\varepsilon} + (hxq)^\frac{1}{2}\right)
\ll  \frac{x}{R},
\end{split}
\end{equation*}
provided $h\le R$ and $R^{12} L^{26} < q \le \frac{x}{R^{11}L^{26}}$. 

Let $M =R^4 L^{6}$, $N =R^3 L^{6} $, and $a_m$ (resp. $b_n$) be the characteristic functions of $\mathcal{B}$ (resp. of the primes). 
It remains to estimate
$$
S:=   \mathop{\sum_{ m> M } \sum_{n> N, \, n \in \tilde{I}(m)}}_{mn \le x} a_m b_n e(\alpha h m n ) ,
$$
where $\tilde{I}(m)=I(m) \cap [P(m),\infty)$. 
Applying a strategy called \emph{cosmetic surgery} in \cite[Sec. 3.2]{Har}, we can remove the 
condition $n\in \tilde{I}(m)$ at the expense of 
a factor $\ll \log x$. This leads to 
$$
S \ll (\log x)  \Biggl| \mathop{\sum_{ m> M } \sum_{n> N}}_{mn \le x} \tilde{a}_m \tilde{b}_n e(\alpha h m n ) \Biggr| +O(1),
$$
for suitable $\tilde{a}_n$, $\tilde{b}_n$ with $|\tilde{a}_m|, |\tilde{b}_n| \le 1$. 
An application of Lemma \ref{lemminarc} with $R^4 L^{6}<q\le \frac{x}{R^3 L^{6}}$ and $h\le R$ yields
$S \ll x/R$.

\section{Proof of Theorem \ref{thm4}}\label{secdis}

For the following upper bound see \cite[Thm. 2.5 of Ch. 2]{KN}.
\begin{lemma}[Erd\H{o}s-Tur\'an]\label{lemET}
With the notation of Theorem \ref{thm4} and $m \in \mathbb{N}$ we have
$$
\sup_{0\le y \le 1}|B(x,y,\alpha)-yB(x)| \le \frac{6 B(x)}{m+1} + \frac{4}{\pi} \sum_{h=1}^m \left(\frac{1}{h}-\frac{1}{m+1}\right)
 \left| \sum_{n\in \mathcal{B}(x)} e(\alpha h n)\right|.
$$
\end{lemma}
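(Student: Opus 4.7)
The plan is to prove the Erdős--Turán inequality by approximating the indicator function of the arc $[0,y]$ on $\mathbb{R}/\mathbb{Z}$ from above and below by trigonometric polynomials of degree $\le m$ (the Beurling--Selberg--Vaaly extremal functions), then summing over the points $\{\alpha n\}$ with $n \in \mathcal{B}(x)$.

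First, I would write the quantity we want to bound as
$$
B(x,y,\alpha)-yB(x) = \sum_{n\in\mathcal{B}(x)}\bigl(\chi_{[0,y]}(\{\alpha n\})-y\bigr),
$$
so that an upper bound reduces to controlling a sum of a $1$-periodic function evaluated along the sequence $(\alpha n)_{n \in \mathcal{B}(x)}$. The central tool is Selberg's construction (based on Beurling's function $B(z) = \bigl(\tfrac{\sin\pi z}{\pi}\bigr)^2\bigl(\sum_{k}\mathrm{sgn}(z-k)(z-k)^{-2} + 2/z\bigr)$): for each $y$ there exist trigonometric polynomials $V^{+}_m, V^{-}_m$ of degree $\le m$ satisfying $V^{-}_m(t) \le \chi_{[0,y]}(\{t\}) \le V^{+}_m(t)$ for all real $t$, with $\int_0^1(V^+_m-V^-_m)\,dt = 1/(m+1)$ and Fourier coefficients $\widehat{V^{\pm}_m}(h)$ for $1\le|h|\le m$ bounded by combinations of $1/(\pi h)$ and $1/(m+1)$.

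Second, I would sandwich
$$
\sum_{n\in\mathcal{B}(x)} V^{-}_m(\alpha n) \;\le\; B(x,y,\alpha) \;\le\; \sum_{n\in\mathcal{B}(x)} V^{+}_m(\alpha n),
$$
expand each trigonometric polynomial via its Fourier series, and separate the constant term from the non-zero frequencies. The constant terms give $\widehat{V^{\pm}_m}(0)B(x) = \bigl(y\pm\tfrac{1}{2(m+1)}\bigr)B(x)$, contributing the first summand $\tfrac{6B(x)}{m+1}$ once the standard choice of extremal polynomial is fixed (the constant $6$ absorbs the $1/(m+1)$ from $\widehat{V^{\pm}_m}(0)-y$ together with loss factors coming from the choice of majorant at the endpoints). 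The oscillatory terms give
$$
\sum_{1\le |h|\le m} \widehat{V^{\pm}_m}(h) \sum_{n\in\mathcal{B}(x)} e(h\alpha n),
$$
and using $|\widehat{V^{\pm}_m}(h)| \le \tfrac{1}{\pi}\bigl(\tfrac{1}{h}-\tfrac{1}{m+1}\bigr)$ together with the symmetry $h\mapsto -h$ turns $\tfrac{1}{\pi}$ into $\tfrac{2}{\pi}$ per majorant and then $\tfrac{4}{\pi}$ after combining majorant and minorant. Taking the supremum over $y$ and the larger of the two sandwich sides yields the stated inequality.

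The main obstacle is the construction of the extremal polynomials $V^{\pm}_m$ with these precise Fourier-coefficient bounds; one must start from Beurling's entire function of exponential type $2\pi$ majorizing the sign function, apply Selberg's shift-and-truncate procedure to produce a band-limited majorant of $\chi_{[0,y]}$, and finally use the Poisson summation formula to periodize it to a trigonometric polynomial of degree $\le m$ on the circle. After that, the computation of the Fourier coefficients and the integrals $\int V^{\pm}_m$ is routine but delicate; the sharp numerical constants $6$ and $4/\pi$ in the statement are precisely what this construction delivers, and the rest of the argument is straightforward Fourier expansion followed by the triangle inequality.
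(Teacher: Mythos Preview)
The paper does not supply its own proof of this lemma; it simply cites \cite[Thm.~2.5 of Ch.~2]{KN} (Kuipers--Niederreiter). So strictly speaking there is no argument in the paper to compare against, only a reference.

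Your proposal via Beurling--Selberg--Vaaler extremal polynomials is a legitimate and well-known route to Erd\H{o}s--Tur\'an type inequalities, but it is \emph{not} the proof given in the cited source. The argument in Kuipers--Niederreiter follows the original Erd\H{o}s--Tur\'an construction: one builds an explicit auxiliary trigonometric polynomial (essentially a tweaked Fej\'er kernel) that approximates a sawtooth/indicator, and the specific constants $6$ and $4/\pi$ drop out of that particular construction. The Selberg--Vaaler machinery you describe actually delivers \emph{sharper} constants---the first term becomes $B(x)/(m+1)$ rather than $6B(x)/(m+1)$---so your claim that ``the sharp numerical constants $6$ and $4/\pi$ are precisely what this construction delivers'' is off; you would in fact prove a stronger inequality than the one stated, which of course still suffices for the application in Theorem~\ref{thm4}.

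In short: your approach is correct in substance and would yield (a strengthening of) the lemma, but it is a genuinely different and more modern argument than the one the paper invokes. For the purposes of this paper either is fine, since the lemma is only quoted and the exact constants are irrelevant to the proof of Theorem~\ref{thm4}.
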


\begin{proof}[Proof of Theorem \ref{thm4}]
Let $\kappa<1/\sqrt{6}$ and pick $\eta>0$ such that $\kappa+\eta < 1/\sqrt{6}$. 
Let $R=\exp((\kappa+\eta)\sqrt{\log x \log\log x})$. 
Since $\alpha $ has finite irrationality measure $\mu$, the continued fraction convergents $a_j/q_j$ of $\alpha$
satisfy $q_{j+1} \ll_\alpha q_j^\mu$.  It follows that for all $x\ge x_0(\alpha)$, there is a $q_j$ satisfying 
the condition 
$Q< q_j \le  \frac{x}{Q}$ 
 where $Q:=R^{12}(\log x)^{26}$. Theorem \ref{thm3} shows that for $h\le R$ and $|\alpha -a_j/q_j|\le 1/q_j^2$, 
$$
 \sum_{n\in \mathcal{B}(x)} e(\alpha h n) \ll x/R.
$$
The result now follows from Lemma \ref{lemET} with $m=\lfloor R \rfloor$. 
\end{proof}

\end{document}